 \DeclareFontFamily{U}{wncy}{}
    \DeclareFontShape{U}{wncy}{m}{n}{<->wncyr10}{}
    \DeclareSymbolFont{mcy}{U}{wncy}{m}{n}
    \DeclareMathSymbol{\Sh}{\mathord}{mcy}{"58}
\newtheorem{theorem}{Theorem}[section]
\newtheorem{lemma}[theorem]{Lemma}
\newtheorem{definition}[theorem]{Definition}
\numberwithin{equation}{section}
\theoremstyle{remark}
\newtheorem{remark}[theorem]{Remark}
\newcommand{\QQ}{\mathbb{Q}}
\newcommand{\Qp}{\mathbb{Q}_p}
\newcommand{\Zp}{\mathbb{Z}_p}
\newcommand{\Z}{\mathbb{Z}}
\newcommand{\p}{\mathfrak{p}}
\newcommand{\Q}{\mathbb{Q}}
\newcommand{\C}{\mathbb{C}}
\newcommand{\cO}{\mathcal{O}}
\newcommand{\Hom}{\mathrm{Hom}}
\newcommand{\Sel}{\mathrm{Sel}}
\renewcommand{\Col}{\mathrm{Col}}
\newcommand{\cX}{\mathcal{X}}
\newcommand{\ur}{\mathrm{ur}}
\newcommand{\TT}{\mathbf{T}}
\begin{document}

\title[Vanishing anticyclotomic $\mu$]{The vanishing of anticyclotomic $\mu$-invariants for non-ordinary modular forms}

\author[J.~Hatley]{Jeffrey Hatley}
\address[Hatley]{
Department of Mathematics\\
Union College\\
Bailey Hall 202\\
Schenectady, NY 12308\\
USA}
\email{hatleyj@union.edu}

\author[A.~Lei]{Antonio Lei}
\address[Lei]{Department of Mathematics and Statistics\\
150 Louis-Pasteur Pvt\\
Ottawa, ON\\
Canada K1N 6N5}
\email{antonio.lei@uottawa.ca}

\begin{abstract} Let $K$ be an imaginary quadratic field where $p$ splits. We study signed Selmer groups for non-ordinary modular forms over the anticyclotomic $\Z_p$-extension of $K$, showing that one inclusion of an Iwasawa main conjecture involving the $p$-adic $L$-function of Bertolini--Darmon--Prasanna implies that  their $\mu$-invariants vanish. This gives an alternative method to reprove a recent result of Matar on the vanishing of the $\mu$-invariants of plus and minus signed Selmer groups for elliptic curves.
\end{abstract}

\thanks{The second named author's research is supported by the NSERC Discovery Grants Program RGPIN-2020-04259 and RGPAS-2020-00096.}

\subjclass[2020]{  11R23, 11F11 (primary); 11R20 (secondary).}
\keywords{Anticyclotomic extensions, Selmer groups, modular forms, non-ordinary primes, $\mu$-invariants.}

\maketitle

\section{Introduction}\label{section:intro}

Let $E/\Q$ be an elliptic curve with good ordinary reduction at a prime $p$, and suppose $E[p]$ is irreducible as a $\mathrm{Gal}(\bar{\Q}/\Q)$-module. Associated to $E$ and the cyclotomic $\Z_p$-extension of $\Q$ is a $p$-primary Selmer group $\Sel(\QQ_{\mathrm{cyc}},E)$. Mazur \cite{mazur72} studied Iwasawa-theoretic properties of $E$ over $\QQ_{\mathrm{cyc}}$ by showing that $\Sel(\QQ_{\mathrm{cyc}},E)$ satisfies the so-called control theorem. This allows us to pass between arithmetic properties of $E$ over $\QQ_\mathrm{cyc}$ and those over intermediate finite subextensions of $\QQ_\mathrm{cyc}$. A famous conjecture due to Ralph Greenberg asserts that the Iwasawa $\mu$-invariant of $\Sel(\QQ_{\mathrm{cyc}},E)$ is zero \cite[Conjecture 1.11]{Gr}, meaning that its Pontryagin dual is a finitely generated $\Zp$-module.

When $E$ has good supersingular reduction at $p$, $\Sel(\QQ_{\mathrm{cyc}},E)$ is less suitable for studying the Iwasawa theory of $E$ since it does not satisfy a nice control theorem. One can instead study refined Selmer groups $\Sel^\pm(\QQ_{\mathrm{cyc}},E)$ defined by Kobayashi \cite{kobayashi03}  or $\Sel^{\#/\flat}(\QQ_{\mathrm{cyc}},E)$ defined by Sprung \cite{sprung09} depending on whether  $a_p(E)=0$ or not. As $E[p]$ is always irreducible in this case, one conjectures that the $\mu$-invariants of these refined Selmer groups vanish. Indeed, extended numerical calculations carried out by Pollack \cite{pollack03} confirm this speculation.

Some progress has been made on these conjectures; for instance, it is known that if $E_1$ and $E_2$ are elliptic curves such that $E_1[p] \simeq E_2[p]$, then the vanishing of the $\mu$-invariant of the Selmer group of one curve is equivalent to the vanishing of that of the other curve \cite{greenbergvatsal,kim09} (see also \cite{HatLei} and \cite{ponsinet} where these results have been generalized to the settings of modular forms and abelian varieties at non-ordinary primes).

While these conjectures are interesting in their own rights, they have important applications in the study of the Iwasawa main conjecture (which relates Selmer groups to analytic $p$-adic $L$-functions). This was one of the primary motivations of Greenberg and Vatsal in \cite{greenbergvatsal}. More generally, given a $p$-ordinary modular form $f$, Emerton, Pollack, and Weston showed that if the $\mu$-invariant of both the Selmer group $\Sel(\QQ_{\mathrm{cyc}},f)$ and the analytic $p$-adic $L$-function $L_p(f)$ vanish, then the main conjecture holds for every modular form in the Hida family to which $f$ belongs \cite[Corollary 1]{epw}.

While one may confirm the vanishing of $\mu(\Sel(\QQ_{\mathrm{cyc}},E))$ for particular elliptic curves, general vanishing results remain elusive. If we instead fix an imaginary quadratic field $K/\Q$ which satisfies a Heegner hypothesis and consider the corresponding \textit{anticyclotomic} $\Z_p$-extension, much more is known.\footnote{In the absence of the Heegner hypothesis, vanishing $\mu$ results were obtained by Pollack and Weston \cite{pollackweston11}.} For instance, in the $p$-ordinary case, the authors proved a general vanishing result for $\mu$-invariants associated to modular forms \cite[Theorem 4.5]{MRL2}. In the supersingular setting, a recent result of Matar gives a vanishing result for anticyclotomic $\mu$-invariants associated to an elliptic curve $E$ with $a_p(E)=0$, see \cite[Theorem B]{Matar2021}.

The goal of this paper is to offer a method for extending these results to non-ordinary modular forms, conditional on one inclusion of an anticyclotomic Iwasawa main conjecture (see hypothesis \textbf{(H-$\subseteq$)} in Section \ref{sec:lemmas}). We work with the signed $\#/\flat$-Selmer groups defined in \cite{BL2}, whose construction generalizes that of the $\pm$-Selmer groups in the elliptic curve case. See Theorem~\ref{thm:mu-vanishes} for our main result. Our approach is similar to the one employed in \cite{MRL2} and gives a new  proof of \cite[Theorem~B]{Matar2021} under a different set of hypothesis (see Remark~\ref{rk:new-Matar}). In addition to the Iwasawa main conjecture, we rely on the study of auxiliary Selmer groups and  a result on the mod $p$ non-vanishing  of generalized Heegner cycles by Hsieh \cite{Hsieh-Doc}.

On combining our results from \cite{HL-MRL, MRL2}, we expect that Theorem~\ref{thm:mu-vanishes} will have applications in anticyclotomic Iwasawa main conjectures in the non-ordinary case. In order to obtain results in this direction, one will have to prove that the signed Selmer groups do not admit non-trivial submodules of finite index, generalizing our previous joint work with Vigni \cite{HLV}. We plan to study these questions in the future.

\subsection*{Acknowledgements}
We thank the anonymous referee and Luochen Zhao for valuable comments and suggestions on an earlier version of this article.

\section{Hypotheses and notation}\label{sec:notation}

\subsection{Running hypotheses}
Throughout this paper, $p \geq 5$ denotes a fixed odd prime. We will consider a normalized eigen-newform of level $\Gamma_0(N)$ and weight $2r$, where $N>3$ is squarefree, $p\nmid N$, and $r \geq 1$ is an integer. We  assume that $p>2r$; since $f$ is assumed non-ordinary at $p$, it follows from a result of Fontaine and Edixhoven that any twist of the associated residual representation is absolutely irreducible. Let $K$ be an imaginary quadratic extension of $\Q$ with discriminant $d_K$ coprime to $Np$ in which $p=\p \bar{\p}$ and every prime dividing $N$ splits. We assume $K$ has unit group $\{ \pm1\}$ and that $d_k$ is odd or divisible by $8$.

If all of these conditions are met, we say that $(f,K,p)$ is \textit{admissible}.

\subsection{Running notation}
Fix an embedding $K \hookrightarrow \C$ as well as an embedding $\overline{\Q} \hookrightarrow \C_p$
 such that $\p$ lands inside the maximal ideal of $\cO_{\C_p}$, the ring of integers of $\C_p$. 

Let $\mathfrak{F}=\Q_p(\{a_n(f)\})$ denote the finite extension of $\Q_p$ (inside of $\mathbb{C}_p$) generated by the Fourier coefficients of $f$, and let $\cO$ denote its valuation ring with a fixed uniformizer $\varpi$. We will assume that $f$ is non-ordinary at $p$, in the sense that $a_p(f)$ is not a unit in $\cO$.

Let $W_f$ be a realization of Deligne's $2$-dimensional $\mathfrak{F}$-representation associated to $f$. If we normalize such that the cyclotomic character has Hodge-Tate weight $+1$, then $W_f$ has Hodge-Tate weights $\{0,1-k\}$. Fix a Galois-stable $\cO$-lattice $R_f \subset W_f$. Write $W_f^\ast = \Hom(W_f,\mathfrak{F})$ and $R_f^\ast = \Hom(R_f,\cO)$ for the dual representations. Denote by $T_f=R_f^\ast(1-r)$ the central critical twist of Deligne's representation, and by $A_f$ its Pontryagin dual $T_f^\vee = \mathrm{Hom}(T_f,\Q_p/\Z_p)$. Our normalization means that $V_f$ is self-dual and we have $A_f=V_f/T_f$ as $G_\QQ$-modules.

 Denote by $K_\infty$ the anticyclotomic $\Z_p$-extension of $K$ with Galois group $\Gamma$. Let $\Lambda$ denote the Iwasawa algebra $\cO[[\Gamma]]$. We write $\mathbf{T}=T_f \otimes \Lambda^\iota$, where $\iota$ denotes the involution on $\Lambda$ sending a group-like element to its inverse.


\section{Signed anticyclotomic Selmer groups}\label{sec:signed-selmer}

In this section we briefly recall the construction and properties of some signed anticyclotomic Selmer groups associated to our modular form $f$. For proofs and more details, the reader is referred to \cite[\S2 and \S4]{BL2}.

\begin{remark} In \cite{BL2}, the authors work with the Galois representation attached to $f$ twisted by a $p$-distinguished (in particular nontrivial) ray class character over $K$. However, this condition is only used in the study of an Euler system in \S3 of loc. cit., which we will not need for our purposes. In the present paper, we will set the ray class character to be the trivial character.   Note that the signed Selmer groups defined in  loc. cit. with trivial ray class character and $r=1$ have also been studied in \cite{CCSS}.
\end{remark}

In \cite{BL2}, Sections $2.3$ and $2.5$ are devoted to defining $\#/\flat$-Coleman maps associated to $T_f$, which are used to define signed Selmer groups in the non-ordinary setting. In particular, for each $\mathfrak{q} \in \{\p, \bar{\p}\}$, Shapiro's lemma gives an identification
\begin{equation}\label{isom:shapiro}
H^1(K_\mathfrak{q},\mathbf{T})\simeq \varprojlim_n \bigoplus_{v \mid \mathfrak{q}}  H^1(K_{n,v},T_f).
\end{equation} By decomposing Perrin-Riou's big logarithm map via the theory of Wach modules, for each $\mathfrak{q} \in \{\p, \bar{\p}\}$ and $\bullet \in \{ \#,\flat \}$ we obtain maps
\[
\mathrm{Col}_{\bullet,\mathfrak{q}} \colon H^1(K_\mathfrak{q},\mathbf{T}) \rightarrow \Lambda.
\]
In fact, two-variable Coleman maps over the Iwasawa algebra of the $\Zp^2$-extension of $K$ have been constructed in \cite{BL2}. For our purposes, we are taking the anticyclotomic specializations of those maps.

\begin{remark}
 In general, one may define a pair of Coleman maps with respect to any Wach module basis satisfying \cite[Theorem~3.5]{leiloefflerzerbes10}. The Coleman maps defined in \cite{BL} are built out of the Wach module basis constructed in \cite[Proposition~V.2.3]{berger04}, which only depends on $p$, $k$ and $a_p(f)$.
 \end{remark}

The relevant local Selmer structures are defined as follows.
\begin{definition}
For each $\mathfrak{q} \in \{ \p, \bar{\p}\}$ and $\mathcal{L}_\mathfrak{q} \in \{0,\emptyset,\#,\flat\}$, define the local structure
\[
H^1_{\mathcal{L}_\mathfrak{q}}(K_\mathfrak{q},\mathbf{T}) = \begin{cases}
H^1(K_\mathfrak{q},\mathbf{T}) & \text{if}\ \mathcal{L}_\mathfrak{q}=\emptyset, \\
\{0\} & \text{if}\ \mathcal{L}_\mathfrak{q}=0 ,\\
\ker(\mathrm{Col}_{\bullet,\mathfrak{q}}) & \text{if}\ \bullet \in \{ \#, \flat \} .
\end{cases}.
\]
\end{definition}
For any place $v \nmid p$ of $K_\infty$, we write
\[
{H^1_{\mathcal{L}_v}(K_v,\mathbf{T})}  = \mathrm{ker}\left( H^1(K_v,\mathbf{T}) \rightarrow H^1(K_v^\mathrm{ur},\mathbf{T}) \right)
\]
for the usual unramified condition.

In what follows, $\Sigma$ is any finite set of places of $K$ containing those dividing $Np\infty$, and $K_\Sigma$ is the maximal extension of $K$ unramified outside of $\Sigma$.

\begin{definition}
For $\mathcal{L}=( \mathcal{L}_\p, \mathcal{L}_{\bar{\p}} )\in \{0,\emptyset,\#,\flat\}^2$, we define the (compact) Selmer groups
\[
\mathrm{Sel}_\mathcal{L}(K,\mathbf{T}) = \ker \left( H^1(K_\Sigma /K, \mathbf{T}) \rightarrow \bigoplus_{\lambda \in \Sigma} \frac{H^1(K_\lambda,\mathbf{T})}{H^1_{\mathcal{L}_\lambda}(K_\lambda,\mathbf{T})} \right).
\]
\end{definition}

For each place $\lambda$, let us write $$H^1(K_{\infty,\lambda},A_f)= \bigoplus_{w|\lambda} H^1(K_{\infty,w},A_f).$$ When $\lambda \nmid p$ or $\mathcal{L}_\lambda \notin \{ 0, \emptyset\}$, we obtain a dual Selmer structure for $A_f$ at $\lambda$ by taking $$H^1_{\mathcal{L}_\lambda}(K_{\infty,\lambda},A_f)\subset  H^1(K_{\infty,\lambda},A_f)$$ to be the orthogonal complement $H^1_{\mathcal{L}_\lambda}(K_\lambda,\mathbf{T})^\perp$ of $H^1_{\mathcal{L}_\lambda}(K_\lambda,\mathbf{T})$ with respect to local Tate duality at $\lambda$
\begin{equation}\label{eq:tate-local-duality}
H^1(K_\lambda,\mathbf{T})\times  H^1(K_{\infty,\lambda},A_f)\rightarrow \mathfrak{F}/\cO.
\end{equation}
When $\lambda \mid p$ and $\mathcal{L}_\lambda \in \{ 0, \emptyset\}$, we set
\[
H^1_{\mathcal{L}_\mathfrak{\lambda}}(K_\mathfrak{\lambda},A_f) = \begin{cases}
H^1(K_\mathfrak{\lambda},A_f) & \text{if}\ \mathcal{L}_\mathfrak{\lambda}=\emptyset \\
\{0\} & \text{if}\ \mathcal{L}_\mathfrak{\lambda}=0
\end{cases}.
\]

\begin{definition}\label{def:dual-selmer} For $\mathcal{L}=( \mathcal{L}_\p, \mathcal{L}_{\bar{\p}} )\in \{0,\emptyset,\#,\flat\}^2$, we define the Selmer groups of $A_f$ over $K_\infty$ by
\[
\mathrm{Sel}_\mathcal{L}(K_\infty,A_f) = \ker \left( H^1(K_\Sigma /K_\infty, A_f) \rightarrow \bigoplus_{\lambda \in \Sigma} \frac{H^1(K_{\infty,\lambda},A_f)}{H^1_{\mathcal{L}_\lambda}(K_{\infty,\lambda},A_f)} \right).
\]
\end{definition}


We will be interested in the Pontryagin duals of these Selmer groups, so we set the following notation.
\begin{definition}
For any set of local conditions $\mathcal{L}$, we write $\mathcal{X}_{\mathcal{L}}(f) := \mathrm{Sel}_\mathcal{L}(K_\infty,A_f)^\vee$ for the Pontryagin dual of the corresponding Selmer group.
\end{definition}

\begin{remark}\label{rmk:relation-to-plus-minus}To motivate the study of these Selmer groups, we mention that the maps $\mathrm{Col}_{\#/\flat,\mathfrak{q}}$ defined using Wach modules are generalizations of the Coleman maps studied by Kobayashi \cite{kobayashi03} and Sprung \cite{sprung09}, which were constructed using Honda theory of formal groups. In particular, when $f$ corresponds to an elliptic curve defined over $\Q$ and $a_p(f)=0$, we recover the $\pm$-Selmer groups that have generated much study in the literature. See \cite[\S5.2]{leiloefflerzerbes10} where the comparison between the two approaches (over the cyclotomic extension of $\QQ$) is discussed. Since the two-variable Coleman maps in \cite{BL} are defined via inverse limits of the one-variable cyclotomic Coleman maps over unramified extensions over $\Qp$, it can be checked that the plus and minus conditions over the $\Zp^2$-extensions are given by the   ``jumping conditions" of Kobayashi and Kim in \cite{kobayashi03,kim13}. In particular, the resulting conditions over the anticyclotomic $\Zp$-extension of $K$ also agree.
\end{remark}

\begin{remark}
In the proof of Lemma \ref{lem:castella} below, we will need to consider the Selmer groups $\mathrm{Sel}_\mathcal{L}(K_\infty,A_f(\psi))$, where $A_f(\psi)$ is the twist of $A_f$ by a finite order character $\psi$ of $\Gamma$. These Selmer groups are defined by modifying Definition \ref{def:dual-selmer} in the obvious way.
\end{remark}

\section{Some preliminary lemmas}\label{sec:lemmas}

We will now collect several preliminary lemmas which will be necessary for the proof of Theorem \ref{thm:mu-vanishes} in the next section. 
The first lemma we prove is a generalization of \cite[Lemma 2.3]{Castella}.

\begin{lemma}\label{lem:castella} Assume $(f,K,p)$ is admissible. Let $\star\in\{\#,\flat\}$. The following statements are true.
\begin{enumerate}
    \item $\mathrm{rank}_\Lambda \mathcal{X}_{(\star,0)}(f)=0$.
    \item $\mathrm{rank}_\Lambda \mathcal{X}_{(\star,\emptyset)}(f)=1$
    \item We have an equality of $\Lambda$-ideals
\[
\mathrm{char}_\Lambda \left( \mathcal{X}_{( \star,0)}(f)\right) = \mathrm{char}_\Lambda \left( \mathcal{X}_{(\star,\emptyset)}(f)\right)_{\mathrm{tor}},
\]
where $\left(\mathcal{X}_{(\star,\emptyset)}(f)\right)_{\mathrm{tor}}$ denotes the maximal $\Lambda$-torsion submodule of $\mathcal{X}_{(\star,\emptyset)}(f)$.
\end{enumerate}
\end{lemma}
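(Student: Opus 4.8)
The plan is to study the three statements together via a Poitou–Tate / global duality argument, exactly in the spirit of \cite[Lemma 2.3]{Castella}. The three Selmer structures $(\star,0)$, $(\star,\emptyset)$ differ only in the local condition at $\bar\p$: at $\bar\p$ we impose the strict (zero) condition, the relaxed (everything) condition, or — implicitly, to interpolate between them — the signed condition $\ker(\Col_{\star,\bar\p})$. The key input is the surjectivity of the Coleman map $\Col_{\star,\bar\p}\colon H^1(K_{\bar\p},\TT)\to\Lambda$ up to pseudo-null (equivalently, that its cokernel is torsion with known characteristic ideal — this is where the choice of $\star$ via \cite[Proposition 4.10]{BL2} matters), together with the fact that $H^1(K_{\bar\p},\TT)$ has $\Lambda$-rank $2$ and $H^1_{\mathcal{L}_{\bar\p}}(K_{\bar\p},\TT)=\ker(\Col_{\star,\bar\p})$ has $\Lambda$-rank $1$.

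Concretely, I would first write down the Poitou–Tate exact sequence comparing the relaxed-at-$\bar\p$ Selmer group with the strict-at-$\bar\p$ one:
\[
0\to\Sel_{(\star,0)}(K_\infty,\TT)\to\Sel_{(\star,\emptyset)}(K_\infty,\TT)\to H^1(K_{\bar\p},\TT)\to \cX_{(\star,\emptyset)}(f)\to\cX_{(\star,0)}(f)\to 0,
\]
or the refined version factoring through the signed condition,
\[
0\to\Sel_{(\star,\star)}(K_\infty,\TT)\to\Sel_{(\star,\emptyset)}(K_\infty,\TT)\xrightarrow{\Col_{\star,\bar\p}} \Lambda \to\cX_{(\star,\star)}(f)\to\cX_{(\star,0)}(f)\to 0.
\]
To produce these I need the compact Selmer group $\Sel_{(\star,\emptyset)}(K_\infty,\TT)$ to have $\Lambda$-rank $0$ (equivalently, to be $\Lambda$-torsion), which should follow from the existence of the Kobayashi–Ota / Howard-type anticyclotomic Euler system classes — the nonvanishing of the relevant Heegner-cycle class at the bottom layer, via Hsieh's mod $p$ result \cite{Hsieh-Doc}, forces the compact Selmer group to be torsion while contributing to rank computations on the dual side. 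Granting $\rank_\Lambda\Sel_{(\star,\emptyset)}(K_\infty,\TT)=0$, the exact sequence immediately gives $\rank_\Lambda\cX_{(\star,\emptyset)}(f)-\rank_\Lambda\cX_{(\star,0)}(f)=\rank_\Lambda H^1(K_{\bar\p},\TT)-\rank_\Lambda\Col_{\star,\bar\p}(\cdot)$; since the image of $\Col_{\star,\bar\p}$ has $\Lambda$-rank $1$ inside $H^1(K_{\bar\p},\TT)$ of rank $2$, the quotient term contributes rank $1$. Combined with a separate argument that $\cX_{(\star,0)}(f)$ is torsion — this again uses the Euler system, now bounding the strict Selmer group — we get (1), and then (2) follows.

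For part (3), with both rank statements in hand I would take $\Lambda$-torsion (i.e. apply $\Char_\Lambda$) to the five-term exact sequence. The submodule $\Sel_{(\star,\star)}(K_\infty,\TT)$ and the map $\Col_{\star,\bar\p}$ landing in $\Lambda$ let me identify the torsion part of $\cX_{(\star,\emptyset)}(f)$: modulo the torsion-free quotient (which accounts for the rank-$1$ part and is matched against $\coker\Col_{\star,\bar\p}$, known to be torsion with controlled characteristic ideal or trivial under the $\star$ hypothesis), the torsion of $\cX_{(\star,\emptyset)}(f)$ maps onto $\cX_{(\star,0)}(f)$ with kernel controlled by $\coker(\Col_{\star,\bar\p})$ and by $\Sel_{(\star,\star)}(K_\infty,\TT)_{\tors}$. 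Multiplicativity of characteristic ideals in exact sequences then yields $\Char_\Lambda(\cX_{(\star,0)}(f))=\Char_\Lambda(\cX_{(\star,\emptyset)}(f))_{\tors}$ up to the characteristic ideal of $\coker(\Col_{\star,\bar\p})$; one needs this cokernel to be pseudo-null (or trivial) for the clean equality, which is precisely the content of the $\star$-choice from \cite[Proposition 4.10]{BL2}.

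The main obstacle I expect is establishing that the compact relaxed Selmer group $\Sel_{(\star,\emptyset)}(K_\infty,\TT)$ has $\Lambda$-rank $0$ and that $\cX_{(\star,0)}(f)$ is $\Lambda$-torsion: these are not formal and require genuinely importing the anticyclotomic Euler system machinery (generalized Heegner cycles, their nonvanishing mod $p$ via Hsieh \cite{Hsieh-Doc}, and the divisibility from \cite{KobOta}), together with the self-duality of $V_f$ to relate the compact and discrete pictures. Once those rank inputs are secured, the rest is the standard Poitou–Tate bookkeeping with characteristic ideals, and the role of \cite[Proposition 4.10]{BL2} is to guarantee the Coleman map $\Col_{\star,\bar\p}$ is well-behaved (surjective, or with pseudo-null cokernel) so that no spurious factor appears in (3).
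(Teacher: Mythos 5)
There is a genuine gap, and it sits at the heart of your rank bookkeeping. Your argument hinges on the claim that the compact relaxed Selmer group $\Sel_{(\star,\emptyset)}(K_\infty,\TT)$ has $\Lambda$-rank $0$. That claim is false, and in fact it is inconsistent with statement (2) that you are trying to prove: in the Poitou--Tate sequence
\[
0\to\Sel_{(\star,0)}(K_\infty,\TT)\to\Sel_{(\star,\emptyset)}(K_\infty,\TT)\xrightarrow{\ \loc_{\bar\p}\ } H^1(K_{\bar\p},\TT)\to \cX_{(\star,\emptyset)}(f)\to\cX_{(\star,0)}(f)\to 0,
\]
the kernel of $H^1(K_{\bar\p},\TT)\to\cX_{(\star,\emptyset)}(f)$ is the image of $\loc_{\bar\p}$. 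If $\Sel_{(\star,\emptyset)}(K_\infty,\TT)$ were torsion, that image would be torsion, so the rank-$2$ module $H^1(K_{\bar\p},\TT)$ would inject into $\cX_{(\star,\emptyset)}(f)$ up to torsion, forcing $\rank_\Lambda\cX_{(\star,\emptyset)}(f)\ge 2$. Conversely, granting (1) and (2), the same sequence forces $\rank_\Lambda\Sel_{(\star,\emptyset)}(K_\infty,\TT)=1+\rank_\Lambda\Sel_{(\star,0)}(K_\infty,\TT)\ge 1$. Relatedly, your displayed rank identity replaces $\rank_\Lambda\image(\loc_{\bar\p})$ by the rank of the image of $\Col_{\star,\bar\p}$ on all of $H^1(K_{\bar\p},\TT)$; these are different quantities (and under your own torsionness hypothesis the former would be $0$, not $1$), so the two halves of your computation contradict each other. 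Making this route work would require proving that $\Sel_{(\star,\emptyset)}(K_\infty,\TT)$ has rank exactly $1$ with non-torsion localization at $\bar\p$, which is essentially the non-triviality of the Heegner/BDP classes and is not supplied by the inputs you cite.

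For comparison, the paper's proof avoids this entirely. Statement (1) is a one-line consequence of Kobayashi--Ota's theorem that $\cX_{(\emptyset,0)}(f)$ is $\Lambda$-torsion, since $\cX_{(\star,0)}(f)$ is a quotient of it. Statements (2) and (3) then follow the Agboola--Howard/Castella template: one twists by finite-order characters $\psi$ of $\Gamma$ and applies the Mazur--Rubin core-rank machinery to the pair of mutually dual structures $(\star,\emptyset)$ and $(\star,0)$; Wiles' formula computes the core rank to be $1$ using only local coranks (the rank-$1$ image of $\Col_{\star,\p}$ from \cite[Lemma 2.16]{BL2}, the local Euler characteristic at $\bar\p$, and the archimedean term), yielding $H^1_{(\star,\emptyset)}(K_\infty,A_f(\psi))\simeq (L/\cO_L)\oplus H^1_{(\star,0)}(K_\infty,A_f(\psi^{-1}))$, from which (2) and (3) are formal. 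Note also that the lemma is stated and proved for both $\star\in\{\#,\flat\}$; your reliance on a distinguished choice of $\star$ via \cite[Proposition 4.10]{BL2} (surjectivity or pseudo-null cokernel of $\Col_{\star,\bar\p}$) is not needed and would weaken the statement.
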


\begin{proof}
Recall \cite[Theorem~1.5]{KobOta} that $\mathcal{X}_{(\emptyset,0)}(f)$ is a finitely generated torsion $\Lambda$-module. Since $\mathcal{X}_{(\star,0)}(f)$ is a quotient of $\mathcal{X}_{(\emptyset,0)}(f)$ (after taking the dual of the inclusion $\Sel_{(\star,0)}(K_\infty,A_f)\hookrightarrow\Sel_{(\emptyset,0)}(K_\infty,A_f) $), statement $(1)$ follows. Using this fact, statements $(2)$ and $(3)$ can be proven in the same way as \cite[Lemma 2.3]{Castella} and \cite[Theorem 1.2.2]{AgboolaHoward}. There are only a few places where the argument uses details specific to the particular local conditions under consideration, so we briefly explain those points.

Let $\psi \colon \Gamma \rightarrow \cO_L^\times$ be a finite order character, where $\cO_L$ is the ring of integers inside some finite extension $L/\Q_p$, and extend it linearly to a map $\Lambda \colon \Gamma \rightarrow \cO_L$. Let $A_f(\psi)=A_f \otimes_{\Lambda,\psi} \cO_L$ be equipped with the diagonal $G_K$-action.

Note that the $\emptyset$ and $0$ local conditions which we defined for $\mathbf{T}$ are dual to each other under \eqref{eq:tate-local-duality}. Since $T_f / \varpi^i T_f \simeq A_f[\varpi^i]$ for all $i \geq 0$, we may apply \cite[Lemma 3.5.3 and
Theorem 4.1.13]{MazurRubin} to obtain an isomorphism
\[
H^1_{(\star,\emptyset)}(K_\infty, A_f(\psi))[\varpi^i] \simeq (L/\cO_L)^r [\varpi^i] \oplus H^1_{(\star,0)}(K_\infty, A_f(\psi^{-1}))[\varpi^i].
\]
Here we have written $H^1_{\mathcal{L}}(K_\infty, A_f(\psi^j))$ for the subgroup of $\Sel_{\mathcal{L}}(K_\infty, A_f(\psi^j))$ consisting of cohomology classes whose restriction to $\lambda \mid p$ lies in $H^1_{\mathcal{L}_{\lambda}}(K_\lambda,A_f(\psi^j))_{\mathrm{div}}$, and $r$ is the core rank of the $\{\star,\emptyset\}$ local conditions \cite[Definition 4.1.11]{MazurRubin}. In particular, by Wiles' formula \cite[Proposition 2.3.5]{MazurRubin}, $r$ is given by
\[
\mathrm{corank}_{\cO} \left(\ker(\mathrm{Col}_{\star,\mathfrak{p}})^\perp \right)+ \mathrm{corank}_{\cO}H^1(K_{\bar{\mathfrak{p}}},A_f(\psi)) - \mathrm{corank}_{\cO}H^0(K_v,A_f(\psi)),
\]
where $v$ denotes the unique archimedean place of $K$.
The first term is equal to $1$; this follows from \cite[Lemma 2.16]{BL2}, which tells us that the image of $\mathrm{Col}_{\star,\mathfrak{p}}$ is of rank 1 over $\Lambda$, so the kernel is of rank 1 as well. The second term is equal to $2$ by the local Euler characteristic formula, and the third term is visibly $2$. Thus $r=1$, and letting $i \rightarrow \infty$ gives an isomorphism
\begin{equation}\label{eq:ah-lemma}
H^1_{\{\star,\emptyset\}}(K_\infty, A_f(\psi)) \simeq (L/\cO_L) \oplus H^1_{\{\star,0\}}(K_\infty, A_f(\psi^{-1})).
\end{equation}
With \eqref{eq:ah-lemma} established, the rest of the arguments are formal and do not rely on any particular details about the local conditions defining the Selmer groups. Thus, the proof concludes in the same way as \cite[Lemma 2.3]{Castella}.
\end{proof}

We now introduce a technical assumption, the validity of which is discussed below in Remark \ref{rmk:hypoth}.

\begin{itemize}
\item[\textbf{(H-$\star$)}] $\mathrm{rank}_\Lambda \mathcal{X}_{(\star,\star)}(f)$=1 for some $\star\in\{\#,\flat\}$.
\end{itemize}

\begin{lemma}\label{lem:short-exact-corank-1}
Suppose $(f,K,p)$ is admissible and that \textbf{(H-$\star$)} holds. Then there exists a short exact sequence
\[
0 \rightarrow D \rightarrow \mathcal{X}_{(\star,\emptyset)}(f) \rightarrow \mathcal{X}_{(\star, \star)}(f) \rightarrow 0,
\]
where $D$ is a torsion $\Lambda$-module.
\end{lemma}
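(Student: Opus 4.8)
The plan is to construct the short exact sequence from the defining inclusions of Selmer structures together with a Poitou--Tate / Greenberg-style comparison at the prime $\bar{\p}$, exactly as in the Poitou--Tate sequence recorded (in commented-out form) in the excerpt. First I would note that the local conditions $(\star,\star)$ and $(\star,\emptyset)$ differ only at $\bar{\p}$, where we pass from $\ker(\Col_{\star,\bar{\p}})^{\perp}$ to the full group $H^1(K_{\infty,\bar{\p}},A_f)$. Dualizing the inclusion $\Sel_{(\star,\star)}(K_\infty,A_f)\hookrightarrow\Sel_{(\star,\emptyset)}(K_\infty,A_f)$ gives a surjection $\mathcal{X}_{(\star,\emptyset)}(f)\twoheadrightarrow\mathcal{X}_{(\star,\star)}(f)$; let $D$ be its kernel, so that the short exact sequence $0\to D\to\mathcal{X}_{(\star,\emptyset)}(f)\to\mathcal{X}_{(\star,\star)}(f)\to 0$ exists for free. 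The only real content is then the claim that $D$ is $\Lambda$-torsion.

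To see that $D$ is torsion, I would compute $\Lambda$-ranks using Lemma~\ref{lem:castella} and hypothesis \textbf{(H-$\star$)}. By Lemma~\ref{lem:castella}(2), $\rank_\Lambda\mathcal{X}_{(\star,\emptyset)}(f)=1$, and by \textbf{(H-$\star$)}, $\rank_\Lambda\mathcal{X}_{(\star,\star)}(f)=1$. Since $\Lambda$-rank is additive on short exact sequences, $\rank_\Lambda D = 1 - 1 = 0$, i.e.\ $D$ is torsion. This is the cleanest route: it avoids any explicit identification of $D$ with a local cohomology quotient and simply reads off the rank from the two inputs. (Alternatively, one can identify $D$ with a subquotient of $H^1(K_{\infty,\bar{\p}},A_f)^\vee\big/\big(\ker(\Col_{\star,\bar{\p}})^\perp\big)^\vee$, which is pseudo-isomorphic to $\image(\Col_{\star,\bar{\p}})$, a torsion-free rank-one $\Lambda$-module by \cite[Lemma 2.16]{BL2}; but one would then have to argue that the relevant global-to-local map has finite cokernel, which is precisely what \textbf{(H-$\star$)} is designed to encode, so the rank-count argument is more direct.)

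I would therefore write the proof as: (i) produce the surjection and define $D$ as its kernel; (ii) invoke Lemma~\ref{lem:castella}(2) for $\rank_\Lambda\mathcal{X}_{(\star,\emptyset)}(f)=1$ and \textbf{(H-$\star$)} for $\rank_\Lambda\mathcal{X}_{(\star,\star)}(f)=1$; (iii) conclude $\rank_\Lambda D=0$ by additivity of rank, hence $D$ is torsion. The main obstacle is conceptual rather than computational: everything hinges on \textbf{(H-$\star$)} being available, since without it the rank of $\mathcal{X}_{(\star,\star)}(f)$ is only known to be $\le 1$ (it is a quotient of $\mathcal{X}_{(\star,\emptyset)}(f)$), and then $D$ would only be known to have rank $0$ or the sequence would be uninformative. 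Granting \textbf{(H-$\star$)}, the argument is short and formal; the real work has already been done in Lemma~\ref{lem:castella} and in the cited results of Kobayashi--Ota and of \cite{BL2}.
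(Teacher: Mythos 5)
Your proposal is correct and follows essentially the same route as the paper: dualize the inclusion $\Sel_{(\star,\star)}(K_\infty,A_f)\hookrightarrow\Sel_{(\star,\emptyset)}(K_\infty,A_f)$ to get the surjection, then conclude that the kernel $D$ is torsion from the equality of $\Lambda$-ranks given by Lemma~\ref{lem:castella}(2) and \textbf{(H-$\star$)}. The paper's proof is exactly this rank-count argument; your parenthetical alternative via the local condition at $\bar{\p}$ is not needed.
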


\begin{proof}
Upon dualizing the natural inclusion map $\Sel_{(\star,\star)}(K_\infty, A_f) \hookrightarrow \Sel_{(\star,\emptyset)}(K_\infty, A_f)$, we obtain a surjection $\mathcal{X}_{(\star,\emptyset)}(f) \rightarrow \mathcal{X}_{(\star, \star)}(f)$. By assumption, $\mathrm{rank}_\Lambda \mathcal{X}_{(\star,\star)}=1$. By Lemma \ref{lem:castella}(2) we also have  $\mathrm{rank}_\Lambda \mathcal{X}_{(\star,\emptyset)}(f)$=1. The lemma is now immediate from rank considerations.
\end{proof}

\begin{remark}\label{rmk:hypoth}
When $r=1$, \textbf{(H-$\star$)} is shown to hold in \cite[Theorem 5.7]{CCSS}. See also \cite[Theorem~1.4]{LongoVigni2} in the case of elliptic curves with $a_p=0$. The proofs of these results require the machinery of Kolyvagin systems and it goes back to Howard \cite{howard,howard2}. When $r>1$ and $a_p(f)=0$, it is expected that \textbf{(H-$\star$)} holds for both $\#$ and $\flat$ (see \cite[Remark 4.11 and Theorem~4.14(v)]{BL}).
\end{remark}

For the rest of the paper, let $\mathfrak{L}(f/K)\in \Lambda^{\mathrm{ur}}$ denote the $p$-adic $L$-function attached to $f$ over $K$ defined by Brako\v{c}evi\'{c} and Bertolini--Darmon--Prasanna in \cite{miljan,BDP}, where $\Lambda^{\mathrm{ur}}$ is the Iwasawa algebra $\Lambda\otimes_{\Zp}\Zp^\ur$, with $\Zp^\ur$ being the $p$-adic completion of the ring of integers of the maximal unramified extension of $\Qp$. We note that this $p$-adic $L$-function is defined using two periods (a complex period $\Omega_\infty$ and  a $p$-adic period $\Omega_p$) which are independent of $f$ (see \cite[\S5.2]{miljan} and \cite[\S2.2]{KobOta}). Indeed, $\Omega_\infty$ depends only on $K$ and $\Omega_p$ is a $p$-adic unit. In particular, the $\mu$-invariant of $\mathfrak{L}(f/K)$ is well-defined.

We  assume the following divisibility from the so-called BDP anticyclotomic Iwasawa main conjecture.

\begin{itemize}
\item[\textbf{(H-$\subseteq$)}] \quad $ \mathfrak{L}(f/K)^2 \Lambda^{\mathrm{ur}}\subseteq \mathrm{char}_\Lambda\left( \mathcal{X}_{(\emptyset,0)}(f) \right) \otimes_\Lambda \Lambda^{\mathrm{ur}}.$
\end{itemize}

\begin{remark}
In fact, the BDP anticyclotomic Iwasawa main conjecture predicts that there is an equality $$\mathfrak{L}(f/K)^2 \Lambda^{\mathrm{ur}}= \mathrm{char}_\Lambda\left( \mathcal{X}_{(\emptyset,0)}(f) \right) \otimes_\Lambda \Lambda^{\mathrm{ur}}.$$
 When $r=1$, Castella--Çiperiani--Skinnner--Sprung gave a proof of this conjecture under certain hypotheses in \cite[Theorem~5.8]{CCSS}.
 For a general $r$, Kobayashi--Ota showed in \cite[Theorem 1.5]{KobOta} that
\[
\mathfrak{L}(f/K)^2 \in \mathrm{char}_\Lambda\left( \mathcal{X}_{(\emptyset,0)}(f)\right) \otimes_\Lambda \Lambda^{\mathrm{ur}} \otimes_{\Zp} \Qp .
\]
 In a forthcoming joint work with Luochen Zhao, we study the integrality of this inclusion for general $r$. We thank the referee for pointing out that the tensor product with $\Qp$ creates problems for studying the $\mu$-invariant.
\end{remark}

\begin{remark}
In \cite{MRL2}, the authors mistakenly cited the weaker version of \textbf{(H-$\subseteq$)} given in \cite[Theorem 1.5]{KobOta} while proving a vanishing $\mu$ result for modular forms with good \textit{ordinary} reduction. In the ordinary case, it is possible to appeal to other sources which do handle the integrality issue, so the validity of \cite[Theorem~4.5]{MRL2} is unaffected. We explain this alternate argument here.

In the $p$-ordinary elliptic curve case, the divisibility of \textbf{(H-$\subseteq$)} has been established by Castella; see displayed equation (A.9) in \cite[Proof of Theorem 3.4]{Castella}. In this argument, Castella refers to Theorem A.5 of op. cit., which is one inclusion of the Heegner point main conjecutre formulated by Perrin-Riou in \cite{Perrin-Riou} and it has been proved by \cite[Theorem B]{howard2}. Replacing Howard's result with \cite[Theorem 3.5]{LongoVigni}, the argument of Castella carries over to prove \textbf{(H-$\subseteq$)}  for general modular forms.

\end{remark}

\section{Main result}\label{sec:main-iwasawa-invariants}

Recall that for $M$ a finitely-generated $\Lambda$-module, there exists a pseudo-isomorphism (that is, a $\Lambda$-morphism with finite kernel and cokernel)
\begin{equation}\label{eq:pseudo}
M\sim \Lambda^{\oplus r}\oplus \bigoplus_{i=1}^s \Lambda/(\varpi^{a_i})\oplus \bigoplus_{j=1}^t\Lambda\big/\bigl(F_j^{n_j}\bigr)
\end{equation}
for suitable integers $r,s,t\ge 0$, $a_i,n_j\ge1$ and irreducible Weierstrass polynomials $F_j\in\cO[X]$. We then define the \emph{$\mu$-invariant} and the \emph{$\lambda$-invariant} of $M$ by
\[ \mu(M):=\sum_{i=1}^s a_i,\qquad \lambda(M):=\sum_{j=1}^tn_j\deg(F_j). \]
Our focus of study in this paper is the $\mu$-invariant. For any set of local conditions $\mathcal{L}$ as in Section \ref{sec:signed-selmer}, we write
\[
\mu_\mathcal{L}(f)=\mu\left(\mathcal{X}_\mathcal{L}(f)\right).
\]

The following is our main result.

\begin{theorem}\label{thm:mu-vanishes}
Suppose $(f,K,p)$ is admissible and that both \textbf{(H-$\star$)} and \textbf{(H-$\subseteq$)} hold. 
If 
$$\mathcal{L} \in \{(\emptyset,0), (\star,0), (\star,\emptyset), (\star,\star) \},$$
then $\mu_{\mathcal{L}}(f)=0$.
\end{theorem}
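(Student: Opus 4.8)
The strategy is to reduce everything to the single case $\mathcal{L}=(\emptyset,0)$, where the Kobayashi--Ota anticyclotomic Iwasawa Main Conjecture can be invoked, and then propagate the vanishing across the four local conditions using the exact sequences and pseudo-isomorphisms assembled in Section~4. First I would record the base case: by \cite[Theorem~1.5]{KobOta}, $\mathcal{X}_{(\emptyset,0)}(f)$ is a torsion $\Lambda$-module and its characteristic ideal is generated by (the square of, up to units) a $p$-adic $L$-function, namely the anticyclotomic theta element attached to $f$. To conclude $\mu_{(\emptyset,0)}(f)=0$ it then suffices to know that this theta element has vanishing $\mu$-invariant; this is where the mod $p$ non-vanishing of generalized Heegner cycles of Hsieh \cite{Hsieh-Doc} enters, since the theta element specializes (via the explicit reciprocity law) to Heegner cycle classes whose reductions mod $\varpi$ are nonzero, forcing the $\mu$-invariant of the $L$-function — hence of $\mathcal{X}_{(\emptyset,0)}(f)$ — to be zero.

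Next I would pass from $(\emptyset,0)$ to $(\star,0)$. As noted in the proof of Lemma~\ref{lem:castella}, $\mathcal{X}_{(\star,0)}(f)$ is a quotient of $\mathcal{X}_{(\emptyset,0)}(f)$ (dualize $\Sel_{(\star,0)}\hookrightarrow\Sel_{(\emptyset,0)}$), and $\mu$ is monotone under surjections of torsion $\Lambda$-modules — more precisely, $\mu$ is additive in exact sequences of torsion modules and nonnegative, so a quotient has $\mu$ no larger than the original. Hence $\mu_{(\star,0)}(f)\le\mu_{(\emptyset,0)}(f)=0$, giving $\mu_{(\star,0)}(f)=0$. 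For $(\star,\emptyset)$: by Lemma~\ref{lem:castella}(3) the characteristic ideal of the torsion part of $\mathcal{X}_{(\star,\emptyset)}(f)$ equals that of $\mathcal{X}_{(\star,0)}(f)$, and since the $\mu$-invariant of a torsion module depends only on its characteristic ideal, $\mu\big(\mathcal{X}_{(\star,\emptyset)}(f)\big)=\mu\big(\big(\mathcal{X}_{(\star,\emptyset)}(f)\big)_{\tor}\big)=\mu\big(\mathcal{X}_{(\star,0)}(f)\big)=0$; here one uses that the free part $\Lambda^{\oplus 1}$ (Lemma~\ref{lem:castella}(2)) contributes nothing to $\mu$. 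Finally, for $(\star,\star)$, invoke Lemma~\ref{lem:short-exact-corank-1}: under \textbf{(H-$\star$)} there is a short exact sequence $0\to D\to\mathcal{X}_{(\star,\emptyset)}(f)\to\mathcal{X}_{(\star,\star)}(f)\to0$ with $D$ torsion, so $\mathcal{X}_{(\star,\star)}(f)$ is a quotient of $\mathcal{X}_{(\star,\emptyset)}(f)$ modulo a torsion submodule; by additivity of $\mu$ on the torsion parts (the free rank-one part descends to the free rank-one part of $\mathcal{X}_{(\star,\star)}(f)$ by \textbf{(H-$\star$)}), $\mu_{(\star,\star)}(f)\le\mu_{(\star,\emptyset)}(f)=0$.

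The main obstacle is the base case: translating the Kobayashi--Ota main conjecture into a statement about $\mu$ and then pinning the $\mu$-invariant of the relevant $p$-adic $L$-function (theta element) to zero. This requires (i) being careful about which $L$-function/theta element appears on the analytic side — in the non-ordinary, higher-weight setting one must use the interpolation of generalized Heegner cycles rather than Heegner points — and (ii) extracting from Hsieh's mod $p$ non-vanishing result a statement valid at the level of $\Lambda$-adic objects, i.e.\ that not all specializations of the theta element vanish modulo $\varpi$. One subtlety is that Hsieh's result is typically stated for the twist by an anticyclotomic character with certain ramification/conductor hypotheses, so I would need to check that the admissibility hypotheses on $(f,K,p)$ (squarefree $N$, every prime dividing $N$ split in $K$, $p>2r$, residual absolute irreducibility, the condition on $d_K$) are exactly what is needed to apply it with trivial character; the splitting hypothesis on $N$ is precisely the generalized Heegner hypothesis that makes the Heegner cycle classes available. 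Everything downstream of the base case is formal manipulation of $\mu$-invariants and the structure theory of finitely generated $\Lambda$-modules, using only the lemmas already in the excerpt.
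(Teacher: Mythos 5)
Your proposal is correct and follows essentially the same route as the paper: establish $\mu_{(\emptyset,0)}(f)=0$ from the Kobayashi--Ota divisibility together with Hsieh's mod $\varpi$ non-vanishing of the Bertolini--Darmon--Prasanna-type $p$-adic $L$-function $\mathfrak{L}(f/K)$, then propagate via the quotient map to $(\star,0)$, Lemma~\ref{lem:castella}(3) to $(\star,\emptyset)$, and Lemma~\ref{lem:short-exact-corank-1} to $(\star,\star)$, using additivity of $\mu$ in short exact sequences whose first term is torsion. The only cosmetic difference is terminological: in this indefinite setting ($N^-=1$) the analytic object is the BDP $p$-adic $L$-function rather than a theta element, but the substance of your base-case argument is exactly what the paper does.
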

\begin{proof} 
We know from \cite[Theorem B]{Hsieh-Doc} that $\mathfrak{L}(f/K)\in \Lambda^\ur\setminus\varpi\Lambda^\ur$, hence $\mu_{(\emptyset,0)}(f)=0$ thanks to  \textbf{(H-$\subseteq$)}.

Since $\mathcal{X}_{(\star,0)}(f)$ is a quotient of $\mathcal{X}_{(\emptyset,0)}(f)$, there is a short exact sequence
\[
0 \rightarrow C \rightarrow  \mathcal{X}_{(\emptyset,0)}(f) \twoheadrightarrow \mathcal{X}_{(\star,0)}(f) \rightarrow 0.
\]
As $\mathcal{X}_{(\emptyset,0)}(f)$ is a finitely generated torsion $\Lambda$-module by \cite[Theorem~1.5]{KobOta}, we see that $C$ must also be torsion over $\Lambda$. By \cite[Proposition 2.1]{HL-MRL}, $\mu$-invariants are additive in short exact sequences of $\Lambda$-modules where the first term is torsion. Since $\mu_{(\emptyset,0)}(f)=0$ and $\mu$-invariants are nonnegative, we deduce that $\mu_{(\star,0)}(f)=0$. This in turn allows us to deduce that $\mu_{(\star,\emptyset)}(f)=0$ thanks to Lemma \ref{lem:castella}(3).

Recall from Lemma \ref{lem:short-exact-corank-1} that we have an exact sequence
\[
0 \rightarrow D \rightarrow \mathcal{X}_{(\star,\emptyset)}(f) \rightarrow \mathcal{X}_{(\star, \star)}(f) \rightarrow 0,
\]
where $D$ is a torsion $\Lambda$-module. Applying \cite[Proposition 2.1]{HL-MRL} once again yields $\mu_{(\star,\star)}(f)=0$.
\end{proof}

\begin{remark}
\label{rk:new-Matar}
As mentioned in Remark \ref{rmk:relation-to-plus-minus}, when $f$ is associated to an elliptic curve over $\Q$ with $a_p(f)=0$, the $\#/\flat$-Selmer groups coincide with the $\pm$-Selmer groups. Thus, when the hypotheses of \cite[Theorem~5.8]{CCSS} hold,  Theorem \ref{thm:mu-vanishes} recovers Matar's result \cite[Theorem B]{Matar2021} since \cite[Theorem~1.4]{LongoVigni2} tells us that \textbf{(H-$\star$)} holds for both choices of sign.
\end{remark}

\bibliographystyle{amsalpha}
\bibliography{references}
\end{document}